\newtheoremstyle{TheoStyle}
    {6pt}                   {8pt}                   {\leftskip=.3cm}                      {-.3cm}                      {\bfseries\scshape}     {:\newline}             { }                     {}                      \theoremstyle{TheoStyle}
\newtheorem{theo}{Theorem}[section]
\newtheorem{lemma}[theo]{Lemma}
\newtheorem{cor}[theo]{Corollary}
\newtheorem{example}[theo]{Example}
\newcommand{\R}{\mathbb R}
\newcommand{\C}{\mathbb C}
\newcommand{\Q}{\mathbb H}
\newcommand{\D}{\mathbb D}
\newcommand{\DQ}{\mathbb{DH}}
\newcommand{\Pj}{\mathbb{P}}
\newcommand{\qi}{\mathbf{i}}
\newcommand{\qj}{\mathbf{j}}
\newcommand{\qk}{\mathbf{k}}
\newcommand{\e}{\varepsilon}
\newcommand{\Cj}[1]{{#1}^\ast}
\newcommand{\norm}[1]{\|#1\|}
\newcommand{\SE}{\operatorname{SE}(3)}
\DeclareMathOperator{\mrpf}{mrpf}
\DeclareMathOperator{\IM}{Im}
\begin{document}
\title{Factorization of Dual Quaternion Polynomials Without Study's Condition}
\author{Johannes Siegele \and Martin Pfurner \and Hans-Peter Schröcker}

\maketitle

\begin{abstract}
  In this paper we investigate factorizations of polynomials over the ring of
  dual quaternions into linear factors. While earlier results assume that the
  norm polynomial is real (``motion polynomials''), we only require the absence
  of real polynomial factors in the primal part and factorizability of the norm
  polynomial over the dual numbers into monic quadratic factors. This obviously
  necessary condition is also sufficient for existence of factorizations. We
  present an algorithm to compute factorizations of these polynomials and use it
  for new constructions of mechanisms which cannot be obtained by existing
  factorization algorithms for motion polynomials. While they produce mechanisms
  with rotational or translational joints, our approach yields mechanisms
  consisting of ``vertical Darboux joints''. They exhibit mechanical
  deficiencies so that we explore ways to replace them by cylindrical joints
  while keeping the overall mechanism sufficiently constrained.
\end{abstract}

\par\medskip\noindent
\textit{Keywords:} factorization, dual quaternions, dual quaternion
polynomials, rational motion, skew polynomial ring, construction of mechanisms
\par\noindent
\textit{MSC 2020:} 
16S36, 70B15 
\section{Introduction}

Factorization of dual quaternion polynomials is a powerful tool for the
systematic construction of mechanisms which can perform a given rational motion
\cite{hegedus13,hegedus15}. In \cite{juettler93} it has been shown that rational
motions can be parametrized by motion polynomials, which are defined as
polynomials over the ring of dual quaternions with real norm polynomial (Study's
condition). Factorization algorithms for motion polynomials are presented in
\cite{hegedus13,li15b}. They decompose a given polynomial into products of
linear motion polynomials. The obtained factors parametrize rotations or
translations. Factorizations are generically non-unique. This allows for the
construction of mechanisms with multiple ``legs'', each corresponding to one
factorization. One leg consists of revolute or prismatic joints, obtained from
linear factors in one factorization.

In this paper, we will generalize the factorization results and the algorithm of
\cite{hegedus13} from motion polynomials to dual quaternion polynomials which no
longer have to fulfill Study's condition
(Sections~\ref{sec:fundamental-results}--\ref{sec:translational-factors}). An
obvious necessary condition for a factorization to exist is factorizability of
the norm polynomial over the dual numbers. Moreover, we adopt the genericity
assumption of \cite{hegedus13} and generally require that the primal part has no
real polynomial factor of positive degree.

The factorization results in \cite{hegedus13} are proven for generic rational
motions in this sense and it is known that their trajectories (orbits of points)
are ``entirely circular'', i.\,e. all intersection points with the plane at
infinity have norm zero \cite{li16}. We will show that this no longer needs to
be true for non-motion polynomials. Nonetheless, we prove a necessary
factorizability condition that is related to the intersection points of
trajectories with the plane at infinity and their norms
(Section~\ref{sec:circularity}).

It is well-known that also non-motion polynomials can be used to parametrize
rational motions (cf. for example
\cite{purwar05,purwar10,pfurner16:_path_planning}). In this more general
setting, linear polynomials no longer parametrize just rotations or
translations, they parametrize ``vertical Darboux motions'', cf.
\cite[p.~321]{bottema90} and \cite{purwar10}. These are rotations around a fixed
axis coupled with a harmonic oscillation along this axis. Rotations and
translations are special cases and correspond to amplitude zero or infinity,
respectively, of the harmonic oscillation. This allows us to construct
mechanisms consisting of revolute, prismatic and ``vertical Darboux joints'',
which we show in Section~\ref{sec:mechanisms}.

The caveat of this approach is that no convenient mechanical realizations are
known for vertical Darboux joints. One way to circumvent this problem is to use
cylindrical joints that allow for an independent rotation around and translation
along a fixed axis. They can replace vertical Darboux joints provided the
mechanism still remains sufficiently constrained. In
Sections~\ref{sec:bennett-motions} and \ref{sec:quadratic-polynomials}, we
illustrate this at hand of quadratic polynomials where the mechanism
construction via factorization into \emph{motion} polynomials fails. Our results
yield mechanisms consisting of two revolute and two cylindrical joints. We
extend this construction to certain quadratic dual quaternion polynomials that
do not satisfy Study's condition.

\section{Preliminaries}\label{sec:1}

Let us define the commutative, unital ring $\D=\R[\e]\slash\langle \e^2 \rangle$
of \emph{dual numbers}. Elements of $\D$ can be written as $a+\e b$ with $a$,
$b\in\R$. If $a\neq 0$, the dual number is invertible and its inverse is given
by $(a-\e b)a^{-2}$. The $\D$-algebra generated by the base elements $1$, $\qi$,
$\qj$ and $\qk$ is called the algebra of \emph{dual quaternions}. The
non-commutative multiplication of dual quaternions abides by the rules
\begin{align*}
  \qi^2=\qj^2=\qk^2=\qi\qj\qk=-1,\qquad
  \qi\e=\e\qi,\qquad
  \qj\e=\e\qj,\qquad
  \qk\e=\e\qk.
\end{align*}
The \emph{dual quaternion conjugate} of $q=q_0+q_1\qi+q_2\qj+q_3\qk$ is given by
$\Cj{q}=q_0-q_1\qi-q_2\qj-q_3\qk$. Further we will call the dual number
$\norm{q}=q\Cj{q}=\Cj{q}q$ the \emph{dual quaternion norm} of $q$ (despite the
fact that this is not a norm in the usual sense). A dual quaternion can always
be written as $q=p+\e d$, where $p$ and $d$ are elements of the real associative
algebra $\Q$ of (Hamiltionian) quaternions generated by $(1,\qi,\qj,\qk)$. We
will call $p$ the primal part and $d$ the dual part of $q$. The set of
invertible elements of $\DQ$ will be denoted by $\DQ^{\times}$. Its elements are
all of the form $q=p+\e d$ with $\norm{p} \neq 0$. In this case $\norm{q}$ is
invertible as well and we have $q^{-1}=\Cj{q}\norm{q}^{-1}$.

It is well known, that the set of dual quaternions with real norm modulo real
scalars is isomorphic to the group $\SE$ of rigid body displacements in
three-space \cite{selig05}. A dual quaternion $q=p+\e d$ has the norm
$\norm{q}=\norm{p}+\e(p\Cj{d}+d\Cj{p})$ which is real precisely if
$p\Cj{d}+d\Cj{p}=0$. This is called \emph{Study's condition}. We can think of
this condition in the following way: the map $(p,d)\mapsto 1/2(p\Cj{d}+d\Cj{p})$
is a symmetric bilinear form on $\Q^2$. The dual quaternion $q=p+\e d$ fulfills
Study's condition if and only if the quaternions $p$ and $d$ are orthogonal with
respect to this bilinear form.

Dual quaternions $q=p+\e d$ with $p\neq 0$
fulfilling Study's condition act on a point $(x_1,x_2,x_3)\in\R^3$ by dual
quaternion multiplication via
\begin{align}\label{eq:act}
  x\mapsto \frac{1}{\norm{p}}(p-\e d)x(\Cj{p}+\e\Cj{d})=\frac{1}{\norm{p}}\left(px\Cj{p}+\e(p\Cj{d}-d\Cj{p})\right),
\end{align}
where $x=1+\e(x_1\qi+x_2\qj+x_3\qk)$. The first part of this map, namely
$x\mapsto px\Cj{p}/\norm{p}$ is a rotation given by the quaternion $p$ while the
second part adds the translation vector $(p\Cj{d}-d\Cj{p})/\norm{p}$. For fixed
$p$, the map $d\mapsto p\Cj{d}-d\Cj{p}$ is surjective onto the three-space
spanned by $\qi$, $\qj$ and $\qk$ and its kernel is spanned by $p$. Restricting the
domain of this map to the orthogonal complement of $p$, i.e. to all $d$ such
that $p+\e d$ fulfills Study's condition, we obtain a vector-space isomorphism.
This shows that the group of dual quaternions which fulfill Study's condition
modulo the real multiplicative group $\R^\times$ is isomorphic to $\SE$.

The map in \eqref{eq:act} however is well defined for all
invertible dual quaternions, thus it can be extended to a surjective
homomorphism between $\DQ^\times$ and $\SE$. Two dual quaternions $q$ and $h$
represent the same rigid body displacement if and only if there exists a dual
number $a+\e b$ with $a\neq0$ such that $q=(a+\e b)h$
\cite{purwar05,purwar10,pfurner16:_path_planning}. In fact, for every dual quaternion $q=p+\e d$ with $p\neq 0$, there exists a unique (up to real scalars) dual quaternion, which represents the same displacement and fulfills Study's condition. It can be computed by multiplying the polynomial with the inverse of its norm polynomial (cf. \cite{pfurner18}): 
\begin{align}\label{eq:fiberproj}
\left(\norm{p}-\frac{\e}{2}(p\Cj{d}+d\Cj{p})\right)(p+\e d) = \norm{p}p+\frac{\e}{2} (d\Cj{p}-p\Cj{d})p.
\end{align}

\subsection{Rational Motions and Dual Quaternion Polynomials}

One-parametric rigid body motions are maps $U\to\SE$ from an interval $U
\subseteq \R$ into the group of rigid body displacements $\SE$. We may think of
them as curves in $\SE$. A motion is called \emph{rational,} if all trajectories
are rational curves in $\R^3$. Rational motions can be represented by
polynomials with dual quaternion coefficients fulfilling Study's condition
\cite{juettler93}.

We consider polynomials $\sum_{i=0}^n m_it^i$ with dual quaternion coefficients
$m_0$, $m_1$, \ldots, $m_n \in \DQ$. The non-commutative multiplication of
polynomials of this type is defined by the convention that the indeterminate $t$
commutes with all coefficients. This turns the set of polynomials over the dual
quaternions into a ring which we denote by $\DQ[t]$.

Similar to the notions above we can define the conjugate of a dual quaternion
polynomial $M = \sum_{i=0}^nm_it^i$ as $\Cj{M}=\sum_{i=0}^n\Cj{m_i}t^i$ and the
norm polynomial as $\norm{M}=M\Cj{M}=\Cj{M}M$. It is a polynomial with
coefficients in the dual numbers $\D$. Since dual quaternion multiplication is
non-commutative, polynomials in $\DQ[t]$ do not come with a canonical
evaluation. We will use the so called \emph{right evaluation $M(h)$} of the
polynomial $M$ at $h\in\DQ$. It is obtained by writing the powers of the
indeterminate on the right side of the coefficients before substituting $h$ for
$t$, i.\,e. $M(h) \coloneqq \sum_{i=0}^nm_ih^i$. Dual quaternions for which the
right evaluation of a polynomial $M$ equals zero are called \emph{right zeros}
of $M$. The notions of \emph{left evaluation} and \emph{left zeros} can be
defined by writing the indeterminate at the left side of the coefficients before
substituting. We will occasionally refer to right zeros of $M$ simply as
``zeros''. A polynomial $M\in\DQ[t]$ can always be written as $P+\e D$ where $P$
and $D$ are both polynomials with (Hamiltonian) quaternion coefficients. Again
we call $P$ the primal part and $D$ the dual part of~$M$.

All dual quaternion polynomials with the property that their norm polynomial
does not lie in $\e\R[t]$ parameterize rational motions
\cite{purwar05,purwar10,pfurner16:_path_planning} via \eqref{eq:act}:
\begin{equation*}
  x\mapsto \frac{1}{\norm{P(t)}}(P(t)-\e D(t))x(\Cj{P(t)}+\e\Cj{D(t)}),\quad
  t \in \R.
\end{equation*}
If dual quaternion polynomials are the same up to a real polynomial factor, they
represent the same motions (this is even true for dual polynomial factors). Dual
quaternion polynomials without a real polynomial factor are called
\emph{reduced}, the maximal real polynomial factor of $M\in\DQ[t]$ will be
denoted by $\mrpf(M)$.

Dual quaternion polynomials with invertible leading coefficient and norm
polynomial in $\R[t]$ are called \emph{motion polynomials}
\cite{hegedus13,li15b}.

\subsection{The Vertical Darboux Motion}\label{sec:vdm}

The simplest rational motions are rotations around a fixed axis and translations
in a fixed direction. Both can be represented by linear motion polynomials, i.e.
dual quaternion polynomials of degree one which fulfill Study's condition. Given
a point $(v_1,v_2,v_3)\in\R^3$, the polynomial $t+v\in\DQ[t]$ with
$v=v_1\qi+v_2\qj+v_3\qk$ parametrizes a rotation around the axis with direction
with $v$. Provided $\norm{(v_1,v_2,v_3)}=1$, the rotation angle is given by
$2\cot^{-1}(t)$. The polynomial $t+\e v$ parametrizes a translation by
$-2v/t$. The rotation around an axis parallel to $(v_1,v_2,v_3)$ and through the
point $(x_1,x_2,x_3)$ is given by
\begin{align}\label{eq:rot}
(1-\e x/2)(t+v)(1+\e x/2)=t+v+\e/2(vx-xv),
\end{align}
where $v = v_1\qi + v_2\qj + v_3\qk$ and $x=x_1\qi + x_2\qj + x_3\qk$.

But not all linear polynomials in $\DQ[t]$ describe rotations or translations:
In general, linear non-motion polynomials describe \emph{vertical Darboux
  motions} which consist of a rotation around a fixed axis coupled with a
harmonic oscillation along the same axis such that one period of oscillation
corresponds to one full rotation (cf. \cite[p.~321]{bottema90} for general
information on vertical Darboux motions; the statement about their relation to
linear polynomials can be found for example in \cite{purwar10}). We may view
rotations and translations as special cases of vertical Darboux motions: They
correspond to zero and infinite oscillation amplitude, respectively. General
(not necessarily vertical) Darboux motions have the property
that all trajectories are ellipses (or line segments). For vertical Darboux
motions, these ellipses lie on cylinders around a fixed axis, the axis of the
motion (Figure~\ref{fig:vertical-darboux-motion}), hence they are called
\emph{vertical} (German ``aufrecht'').
\begin{figure}
\centering
  \includegraphics{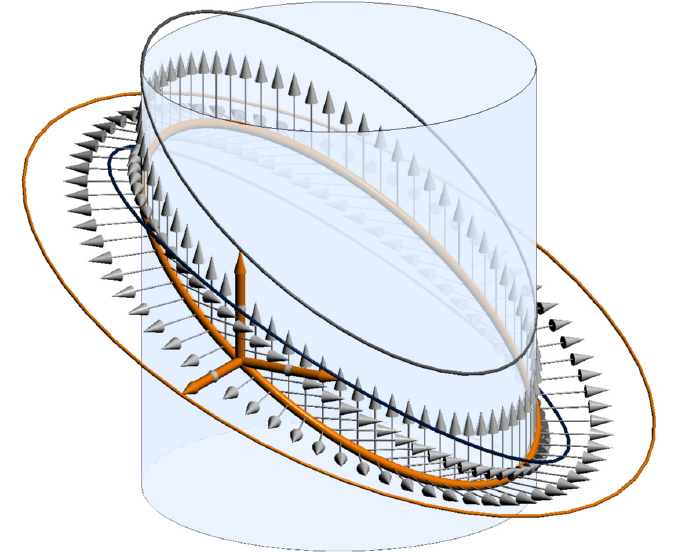}
 \caption{Vertical Darboux motion}
  \label{fig:vertical-darboux-motion}
\end{figure}
A linear (non-motion) polynomial representing a vertical Darboux motion is for
example $t+\qk+\e$. Using \eqref{eq:fiberproj} we can find the motion polynomial
$(t^2+1)(t+\qk)+\e(1-t\qk)$ which represents the same vertical Darboux motion.
This polynomial however is of degree three. Thus, by not restricting ourselves
to motion polynomials, we can represent certain motions by polynomials with
lower degree. In this view, neglecting Study's condition introduces a new class
of elementary motions, namely vertical Darboux motions, as they can be
represented by linear non-motion polynomials.

\section{Factorization of Generic Dual Quaternion Polynomials}\label{sec2}
Factorization of dual quaternion polynomials into linear factors was the topic
of \cite{hegedus13, li15b}. One of their intentions was to use factorization for
the construction of mechanisms, where a linear factor corresponds to a revolute
or prismatic (translational) joint \cite{hegedus15}. The used factorization
algorithms fundamentally rely on the fact that the dual quaternion polynomial $M
= P + \e D$ is a \emph{motion polynomial,} that is, it satisfies the Study
condition $P\Cj{D} + D\Cj{P} = 0$.

In this article, we study the factorization theory of general dual quaternion
polynomials which is interesting in its own right and provides us with
additional possibilities for the construction of mechanisms. The factorization
into linear factors corresponds to a decomposition of the motion into the
product of vertical Darboux motions, coupled by the common parameter $t$, and
with rotations and translations as special cases. Our aim is the generalization
of the results from \cite{hegedus13} to generic polynomials $M \in \DQ[t]$.

\subsection{Fundamental Results}\label{sec:fundamental-results}
In this section we recall known results of \cite{hegedus13} and also present
slightly more general versions of Lemma~3 and Theorem~1 from \cite{hegedus13}.
They are generalized to dual quaternion polynomials which no longer have to
fulfill Study's condition. We give proofs for all new results, even if they are
sometimes very similar to the proofs given in \cite{hegedus13}.

The first lemma concerns polynomial division in a dual quaternion setting. (We
will only use this type of polynomial division in this paper and hence refrain
from using the more explicit term ``polynomial right division''.)
\begin{lemma}[{\cite[Lemma~1]{hegedus13}}]\label{lemma1}
	Let $A$ and $B\in\DQ[t]$ such that $B$ is monic, i.\,e. its leading
  coefficient is $1$. Then there exist unique $Q$, $R\in\DQ[t]$ such that
  $A=QB+R$ and $\deg(R)<\deg(B)$. Further, if $h\in\DQ$ is a root of $B$, then
  $A(h)=R(h)$.
\end{lemma}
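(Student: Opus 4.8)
The plan is to prove existence and uniqueness of the quotient and remainder in the standard way for polynomial division, but carefully tracking that the non-commutativity forces everything onto one side. The key observation enabling ordinary division is that $B$ is monic: since its leading coefficient is $1$ (which is central and invertible), multiplying $B$ on the left by a monomial $m_i t^i$ produces a polynomial whose leading term is exactly $m_i t^{i+\deg B}$, with no interference from the non-commutativity of the coefficient ring.

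For existence, I would induct on $\deg(A)$. If $\deg(A) < \deg(B)$, take $Q = 0$ and $R = A$. Otherwise, writing $a t^{\deg A}$ for the leading term of $A$ and $n = \deg(A) - \deg(B) \geq 0$, I would subtract $a t^n B$ from $A$. Because $B$ is monic, the leading term of $a t^n B$ is $a t^{\deg A}$, so $A - a t^n B$ has strictly smaller degree than $A$. By the induction hypothesis there exist $Q'$, $R$ with $A - a t^n B = Q' B + R$ and $\deg(R) < \deg(B)$; then $Q = a t^n + Q'$ works. For uniqueness, I would suppose $Q_1 B + R_1 = Q_2 B + R_2$ with both remainders of degree less than $\deg(B)$, giving $(Q_1 - Q_2) B = R_2 - R_1$. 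If $Q_1 \neq Q_2$, the left side has degree at least $\deg(B)$ (again because $B$ is monic, so no leading-coefficient cancellation can occur when multiplying by the nonzero polynomial $Q_1 - Q_2$), contradicting that the right side has degree less than $\deg(B)$. Hence $Q_1 = Q_2$ and then $R_1 = R_2$.

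The final assertion about the root is where the non-commutative right evaluation must be handled with care, and this is the part I expect to be the main (though still mild) obstacle. The claim is that if $h$ is a root of $B$, meaning $B(h) = 0$ under right evaluation, then $A(h) = R(h)$. The subtlety is that right evaluation is \emph{not} multiplicative in general: $(QB)(h) \neq Q(h)\,B(h)$ for arbitrary $Q$. The plan is to prove the needed special case directly. Writing $Q = \sum_j q_j t^j$ and $B = \sum_i b_i t^i$, the product $QB$ has the coefficient of $t^k$ equal to $\sum_{i+j=k} q_j b_i$ (since $t$ commutes with all coefficients), so
\begin{align*}
  (QB)(h) = \sum_k \Big(\sum_{i+j=k} q_j b_i\Big) h^k = \sum_j q_j \Big(\sum_i b_i h^{i+j}\Big) = \sum_j q_j \Big(\sum_i b_i h^i\Big) h^j = \sum_j q_j\, B(h)\, h^j.
\end{align*}
Thus when $B(h) = 0$ each summand vanishes and $(QB)(h) = 0$. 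Applying right evaluation to the identity $A = QB + R$ and using additivity of right evaluation then yields $A(h) = (QB)(h) + R(h) = R(h)$, as desired. I would therefore isolate the identity $(QB)(h) = \sum_j q_j\,B(h)\,h^j$ as the single computational lemma underlying the root statement.
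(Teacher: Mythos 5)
Your proof is correct. Note that the paper gives no proof of this lemma at all---it is quoted verbatim from \cite{hegedus13}---so there is nothing to diverge from; your argument (induction on $\deg A$ for existence, the monicity-prevents-cancellation degree argument for uniqueness, and the identity $(QB)(h)=\sum_j q_j\,B(h)\,h^j$ to handle the non-multiplicativity of right evaluation) is exactly the standard one behind the cited result, and you correctly isolate the evaluation identity as the only point where non-commutativity genuinely matters.
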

Note that this statement can be generalized to a divisor $B$ with invertible
leading coefficient. 

The next lemma states a well-known relation between right zeros and linear right
factors which is also valid for polynomials over more general rings.

\begin{lemma}[{\cite[Lemma~2]{hegedus13}}]
  \label{lemma2}
	Let $M\in\DQ[t]$. A dual quaternion $h\in\DQ$ is a zero of $M$ if and only if
  $t-h\in\DQ[t]$ is a right factor of $M$.
\end{lemma}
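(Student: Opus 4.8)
The plan is to derive both implications from the polynomial division of Lemma~\ref{lemma1}, exploiting its second assertion that evaluation at a root of the (monic) divisor coincides with evaluation of the remainder. First I would apply Lemma~\ref{lemma1} with $A = M$ and $B = t - h$. Since $B$ is monic of degree one, this yields unique $Q$, $R \in \DQ[t]$ with $M = Q(t-h) + R$ and $\deg(R) < 1$, so that $R$ is a constant dual quaternion. The element $h$ is a root of $B = t-h$, because its right evaluation there is $h - h = 0$; hence the final claim of Lemma~\ref{lemma1} gives $M(h) = R(h) = R$.

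For the forward direction I would assume that $h$ is a zero of $M$, i.e. $M(h) = 0$. By the identity just established this forces $R = 0$, whence $M = Q(t-h)$ and $t - h$ is a right factor of $M$.

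For the converse I would assume that $t - h$ is a right factor, say $M = \tilde{Q}(t-h)$ for some $\tilde{Q} \in \DQ[t]$. This is itself a division of $M$ by the monic polynomial $t-h$ with remainder $0$, so by the uniqueness part of Lemma~\ref{lemma1} we must have $R = 0$. Therefore $M(h) = R = 0$, that is, $h$ is a zero of $M$.

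The step to be handled with care is the converse: because $\DQ[t]$ is non-commutative, right evaluation is \emph{not} a ring homomorphism, so one cannot simply substitute and write $M(h) = \tilde{Q}(h)\,(h-h) = 0$. The argument above sidesteps this pitfall by invoking uniqueness of the quotient and remainder together with the relation $M(h) = R(h)$ from Lemma~\ref{lemma1}, rather than attempting to evaluate the product $\tilde{Q}(t-h)$ at $h$ directly. (Alternatively, one could verify the converse by the explicit telescoping computation $\sum_i q_i h^{i+1} - \sum_i q_i h^{i+1} = 0$ after expanding $\tilde{Q}(t-h)$ coefficientwise, but the uniqueness route is cleaner and reuses Lemma~\ref{lemma1} verbatim.)
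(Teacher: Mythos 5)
Your proof is correct. The paper itself gives no proof of this lemma --- it is quoted as a well-known fact from \cite{hegedus13} --- and your argument via Lemma~\ref{lemma1} (division by the monic polynomial $t-h$, the evaluation identity $M(h)=R(h)$, and uniqueness of the remainder for the converse) is exactly the standard one used in that reference; your explicit warning that right evaluation is not a ring homomorphism, so one cannot evaluate the product $\tilde{Q}(t-h)$ at $h$ directly, correctly identifies and avoids the only real pitfall.
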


If a polynomial $M = P + \e D$ satisfies Study's condition, its norm polynomial
$\norm{M}$ is real. Linear factors of $M$ are closely related to quadratic real
factors of $\norm{M}$. In our setting, the norm polynomial $\norm{M}$ has dual
numbers as coefficients. Its quadratic factors over the dual numbers are also
important for computing linear factors of $M$. This manifests in the following
lemma. Its proof follows the proof of \cite[Lemma~3]{hegedus13} but the more
general assumptions require to consider a few more details.

\begin{lemma}\label{lemma3}
  Let $M=P+\e D\in\DQ[t]$ be a monic polynomial and $N\in \D[t]$ a monic
  polynomial of degree two which is a factor of $\norm{M}$ whose primal part
  does not divide $P$. Then there exists a unique $h\in\DQ$ such that
  $t-h\in\DQ[t]$ is a right factor of $M$ and $\norm{t-h}=N$.
\end{lemma}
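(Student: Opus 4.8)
The plan is to reduce the existence of $h$ to a polynomial division of $M$ by $N$ and to read off $h$ as the zero of the linear remainder. Since $N\in\D[t]$ is monic (and, having dual-number coefficients, central in $\DQ[t]$), Lemma~\ref{lemma1} yields unique $Q,R\in\DQ[t]$ with $M=QN+R$ and $\deg R<2$; write $R=c_1t+c_0$ with $c_1,c_0\in\DQ$. The candidate factor will be $t-h$ with $h\coloneqq -c_1^{-1}c_0$, so that $R=c_1(t-h)$. Everything then hinges on showing that $c_1$ is invertible and that $\norm{t-h}=N$.

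First I would establish invertibility of $c_1$, which is where the hypothesis $\prim(N)\nmid P$ enters. Applying the ring homomorphism $\prim\colon\DQ[t]\to\Q[t]$ (setting $\e=0$) to $M=QN+R$ and using uniqueness of division by the monic polynomial $\prim(N)$, I get that $\prim(R)=\prim(c_1)t+\prim(c_0)$ is the remainder of $P$ modulo $\prim(N)$. Because $\prim(\norm M)=P\Cj P$ is real and is divisible by $\prim(N)$, reducing $P\Cj P$ modulo $\prim(N)$ shows $\prim(N)\mid \norm{\prim(R)}$ in $\R[t]$. Since $\norm{\prim(R)}$ has degree at most two, a short degree count excludes $\prim(c_1)=0$: otherwise $\norm{\prim(R)}=\norm{\prim(c_0)}$ would be a constant divisible by the quadratic $\prim(N)$, forcing $\prim(R)=0$, i.e. $\prim(N)\mid P$, contrary to assumption. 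Hence $\prim(c_1)\neq 0$ is an invertible quaternion, so $\norm{c_1}$ is an invertible dual number and $c_1\in\DQ^\times$.

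Next I would prove $\norm{t-h}=N$. From $R=c_1(t-h)$ and the centrality of $\norm{t-h}=(t-h)\Cj{(t-h)}$ one computes $\norm R=\norm{c_1}\,\norm{t-h}$. On the other hand, expanding $\norm M=M\Cj M=(QN+R)(N\Cj Q+\Cj R)$ and reducing modulo the central polynomial $N$ gives $\norm M\equiv\norm R\pmod N$; since $N\mid\norm M$, it follows that $N\mid\norm R$. Both $N$ and $\norm R$ are quadratic (the leading coefficient of $\norm R$ is the invertible $\norm{c_1}$), and $N$ is monic, so comparing degrees and leading coefficients over $\D[t]$ forces $\norm R=\norm{c_1}\,N$. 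Cancelling the invertible $\norm{c_1}$ yields $\norm{t-h}=N$.

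Finally I would close the loop. Since $\norm{t-h}=N$, right-evaluating $N=(t-h)(t-\Cj h)$ at $h$ gives $N(h)=0$; hence by Lemma~\ref{lemma1} we have $M(h)=R(h)=c_1h+c_0=0$, so $t-h$ is a right factor of $M$ by Lemma~\ref{lemma2}. For uniqueness, any $h'$ with $\norm{t-h'}=N$ satisfies $N(h')=0$, so $M(h')=R(h')=c_1h'+c_0$; if in addition $t-h'$ divides $M$, then $M(h')=0$, and invertibility of $c_1$ gives $h'=-c_1^{-1}c_0=h$. The main obstacle is the invertibility of $c_1$: in contrast to the motion-polynomial case of \cite{hegedus13}, $N$ has dual-number rather than real coefficients, so one must descend to primal parts and run the divisibility and degree arguments over the non-field $\D$, carefully exploiting that $\prim(N)\nmid P$.
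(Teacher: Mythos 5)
Your proof is correct and follows essentially the same route as the paper's: divide $M$ by the central monic polynomial $N$ via Lemma~\ref{lemma1}, use the hypothesis $\prim(N)\nmid P$ to show the linear remainder has invertible leading coefficient, take $h$ to be the remainder's unique zero, deduce $\norm{t-h}=N$, and obtain uniqueness from the uniqueness of that zero. The only cosmetic differences are that the paper establishes invertibility by writing $\norm{R}=\lambda N$ with $\lambda\in\D$ invertible instead of reducing to primal parts, and concludes $t-h\mid M$ by showing $t-h$ divides both $N$ and $R$ rather than by evaluating at $h$ (where, incidentally, you should write $N=(t-\Cj{h})(t-h)$ so that the vanishing factor sits on the right).
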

\begin{proof}
  By Lemma~\ref{lemma1} there exist $Q$, $R=r_1t+r_0\in\DQ[t]$ such that
  $M=QN+R$. Using this we can see that the norm of $M$ is of the following form:
  \begin{align*}
    \norm{M}=(N\norm{Q}+R\Cj{Q}+Q\Cj{R})N + \norm{R}
  \end{align*}
  The primal part of $N$ does not divide $P$, therefore the primal part of $R$
  cannot be zero. Further, $N$ must be a factor of $\norm{R}$ since it is a
  factor of $\norm{M}$. Thus there exists $\lambda\in\D$ such that
  $\norm{R}=\lambda N$. The scalar $\lambda$ is even invertible as otherwise the
  primal part of $R$ would vanish. The leading coefficient of $\norm{R}$ is
  $\norm{r_1}$, which has a non-zero primal part due to the facts that $N$ is
  monic and $\lambda$ is invertible. This shows that $r_1$ is invertible and
  consequently $h:=r_1^{-1}r_0$ is the unique zero of $R$. By
  Lemma~\ref{lemma2}, there exists $\widetilde{r}\in\DQ$ such that
  $R=\widetilde{r}(t-h)$ and further $\lambda
  N=\norm{R}=(t-\Cj{h})\norm{\widetilde{r}}(t-h)$. Since $\lambda$ is a dual
  number, $t-h$ is also a right factor of $N$ and consequently of $M$.
  Monicity of $N$ implies $N = \norm{t-h}$. This shows existence.

  To prove uniqueness let us assume there exists $\widetilde{h}$ such that
  $M(\widetilde{h})=N(\widetilde{h})=0$. Then $R(\widetilde{h})=0$,
  but the zero of $R$ is unique.
\end{proof}

We will see below that factorizability of $M=P+\e D$ implies that the norm
polynomial $\norm{M}$ must factor into quadratic polynomial factors over the
dual numbers. Now we have all the tools to prove that this condition is also
sufficient, provided $P$ does not have a real polynomial factor of positive
degree. This distinguishes factorization of general polynomials $M \in \DQ[t]$
from the factorization of motion polynomials as investigated in
\cite{hegedus13,li15b}: The latter unconditionally factorize if the
primal part is free from non-trivial real factors since their norm polynomial
always factors into quadratic polynomials.

Let us start by investigating the factorizability of polynomials in $\D[t]$.

\begin{lemma}\label{lem:dualfac}
  Let $p = f+\e g\in\D[t]$ be a monic polynomial. Further let us decompose the
  primal part of $p$ into $f=\prod_{i=1}^mN_i^{n_i}$ where $N_1$, \ldots,
  $N_m\in\R[t]$ are coprime irreducible monic polynomials and $n_1$, \ldots,
  $n_m\in\mathbb{N}$ are positive integers. The polynomial $p$ admits a
  factorization such that all factors are monic and have an irreducible primal
  part if and only if $\prod_{i=1}^mN_i^{n_i-1}$ is a factor of $g$.
\end{lemma}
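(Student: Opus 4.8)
The plan is to exploit the simple structure of multiplication in $\D[t]$: since $\e^2=0$ and $\D[t]$ is commutative, a product of monic factors $M_i+\e L_i$ with $M_i\in\R[t]$ irreducible and monic has primal part $\prod_i M_i$ and dual part $\sum_i L_i\prod_{j\neq i}M_j$. Comparing primal parts with $f=\prod_{k=1}^m N_k^{n_k}$ and using unique factorization in $\R[t]$, any such factorization of $p$ must consist, up to reordering, of exactly $n_k$ factors whose primal part equals $N_k$, for each $k$. Thus the whole problem reduces to understanding which dual parts $g$ are realizable as $\sum_i L_i\prod_{j\neq i}M_j$ once the primal parts are fixed.

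For necessity I would observe that deleting a single factor from the product $\prod_i M_i$ lowers the multiplicity of exactly one irreducible by one. Hence, for every fixed $k$, each summand $L_i\prod_{j\neq i}M_j$ is divisible by $N_k^{n_k-1}$, so $g$ is divisible by $N_k^{n_k-1}$. As the $N_k$ are pairwise coprime, $g$ is divisible by their product $\prod_{k=1}^m N_k^{n_k-1}$.

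For sufficiency I would argue in two steps. First, reduce to the prime-power case by seeking a coarse factorization $p=\prod_{k=1}^m\bigl(N_k^{n_k}+\e G_k\bigr)$ with $N_k^{n_k-1}\mid G_k$; expanding the product, this requires $g=\sum_k G_k\prod_{l\neq k}N_l^{n_l}$. Writing $G_k=N_k^{n_k-1}H_k$ and $g=\bigl(\prod_k N_k^{n_k-1}\bigr)q$ (possible by hypothesis) and dividing by $\prod_k N_k^{n_k-1}$ turns this into the B\'ezout-type equation $q=\sum_k H_k\prod_{l\neq k}N_l$, whose coefficients $\prod_{l\neq k}N_l$ have no nonconstant common divisor. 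I would solve it by the Chinese Remainder Theorem, choosing each $H_k$ of degree less than $\deg N_k$; a degree count (both sides have degree less than $\sum_l\deg N_l$ and agree modulo every $N_k$, hence modulo $\prod_l N_l$) shows the equation then holds exactly, and $\deg G_k<\deg N_k^{n_k}$ keeps every factor monic.

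The second step handles a single prime power: given $N^n+\e\tilde g$ with $N^{n-1}\mid\tilde g$, the identity $\prod_{i=1}^n(N+\e L_i)=N^n+\e N^{n-1}\sum_i L_i$ shows it suffices to choose the $L_i$ with $\sum_i L_i=\tilde g/N^{n-1}$; since $\deg(\tilde g/N^{n-1})<\deg N$, putting all of the quotient into $L_1$ and the remaining $L_i$ to zero yields monic factors with irreducible primal part. Applying this to each $N_k^{n_k}+\e G_k$ and concatenating gives the desired factorization of $p$. The main obstacle I anticipate is the degree bookkeeping: one must verify that the CRT representatives, reduced modulo the $N_k$, still satisfy the lifted equation exactly and that the resulting factors remain monic — both hinge on the bound $\deg g<\deg f$ coming from monicity of $p$.
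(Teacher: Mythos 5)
Your proof is correct, and while its skeleton parallels the paper's (necessity by expanding the product and using $\e^2=0$; sufficiency by reducing to the partial-fraction-type equation $q=\sum_k H_k\prod_{l\neq k}N_l$ with $\deg H_k<\deg N_k$, where $q = g/\prod_k N_k^{n_k-1}$), the way you solve that key equation is genuinely different. The paper first pulls out the real factor $\prod_k N_k^{n_k-1}$ and then proves existence of the $H_k$ (called $\lambda_i$ there) by linear algebra: the polynomials $B_i=\prod_{j\neq i}N_j$, together with $tB_i$ for the quadratic $N_i$, form an $\R$-basis of the space of real polynomials of degree at most $\deg\bigl(\prod_i N_i\bigr)-1$, shown by a dimension count plus linear independence via evaluation at the complex roots of the $N_i$. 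You instead solve the equation by the Chinese Remainder Theorem, choosing $H_k\equiv q\bigl(\prod_{l\neq k}N_l\bigr)^{-1} \pmod{N_k}$ with $\deg H_k<\deg N_k$, and conclude exactness from the degree bound $\deg q<\deg\prod_l N_l$, which, as you note, follows from monicity of $p$; this bookkeeping checks out. Your route buys two things: it is constructive (the $H_k$ are obtained by modular inversion rather than from an abstract solvability statement for a linear system), and it is independent of the ground field --- it never uses that real irreducible polynomials have degree one or two, whereas the paper's basis argument splits into linear and quadratic cases and evaluates at complex roots. What the paper's argument buys in exchange is an explicit description of the solution space (the $\lambda_i$ are coordinates with respect to a basis), which makes the uniqueness-up-to-permutation of the factorization in the generic case, used later in Section~\ref{sec:algorithm}, immediately visible; your CRT construction yields the same uniqueness, but one has to remark that the congruences pin down each $H_k$. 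Note also that your final factorization --- one factor $N_k+\e H_k$ per irreducible, padded with $n_k-1$ copies of $N_k$ with vanishing dual part --- has exactly the same shape as the one the paper constructs.
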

\begin{proof}
  Let us first assume that $p$ admits a factorization $\prod_{j=1}^n (f_j+\e
  g_j)$ where the $f_j$ are monic and irreducible (not necessarily coprime)
  factors of $f$. Obviously, we get $f=\prod_{j=1}^n f_j$ and further
  \begin{equation*}
    f+\e g=f+\e\sum_{i=1}^ng_i\prod_{j\neq i}f_j,
  \end{equation*}
  which shows that $\prod_{i=1}^mN_i^{n_i-1}$ is a factor of~$g$.

  Let us assume on the other hand that $\prod_{i=1}^mN_i^{n_i-1}$ is a factor of
  $g$. Then $p=\prod_{i=1}^mN_i^{n_i-1}\widetilde{p}$ where
  $\widetilde{p}=\prod_{i=1}^m N_i + \e \lambda$ is a monic polynomial, i.e.
  $\deg(\lambda)\le\deg(\widetilde{p}) -1$. Set $B \coloneqq \prod_{i=1}^m N_i$
  and, for $i \in \{1,\ldots,m\}$, $B_i \coloneqq \prod_{j\neq i}N_j$. The
  polynomial $\widetilde{p}$ admits a factorization of postulated shape, if
  there exist $\lambda_i\in\R[t]$ with $\deg(\lambda_i)<\deg(N_i)$ for $i=1$,
  \ldots, $m$ such that
  \begin{equation*}
    \widetilde{p}=\prod_{i=1}^m (N_i+\e \lambda_i)=B+\e\sum_{i=1}^m\lambda_iB_i.
  \end{equation*}
  Without loss of generality we may assume that there is $k\in\mathbb{N}$ such
  that $N_1$, \ldots, $N_k$ are quadratic polynomials, and $N_{k+1}$, \ldots,
  $N_m$ are linear. To show the existence of such $\lambda_i$, it is sufficient
  to show that the set of polynomials $B_i$, for $i=1$, \ldots, $m$ and $tB_i$
  for $i=1$, \ldots, $k$ form an $\R$-basis of the real vector space of
  polynomials up to degree $\deg(\widetilde{p})-1=m+k-1$. Since the cardinality
  of this set is $m+k$, we only need to show linear independence. Let
  us take $\mu_1,\ldots,\mu_m\in\R$ and $\nu_1,\ldots,\nu_k\in\R$ such that
  \begin{align}\label{eq:1}
    \sum_{i=1}^m \mu_iB_i + \sum_{i=1}^k \nu_itB_i=0.
  \end{align}
  Since all $N_i$ are coprime, they have distinct complex roots $z_i$,
  $\overline{z_i}\in\C\backslash\R$ for $i=1$, \ldots, $k$ and $z_i\in\R$ for
  $i=k+1$, \ldots, $m$. The polynomials $B_i = \prod_{j\neq i}^m N_j$ evaluated
  at $z_\ell$ or $\overline{z_\ell}$ are all zero, except for $i=\ell$. Thus,
  evaluating the left hand side of \eqref{eq:1} at these zeros results in a
  system of equations
  \begin{equation}
    \label{eq:2}
    \begin{aligned}
      \mu_i + \nu_iz_i&=0,\\
      \mu_i + \nu_i\overline{z_i}&=0,   
    \end{aligned}
  \end{equation}
  for $i=1,\ldots,k$ and $\mu_i=0$ for $i > k$. The difference of these two
  equations yields
  \begin{equation*}
    2\nu_i\IM{z_i} = 0.
  \end{equation*}
  Since $z_i \in \C\setminus\R$, $\IM{z_i} \neq 0$ so that $\nu_i = 0$ and hence
  also $\mu_i = 0$ follows. Thus, the set of equations \eqref{eq:2} only has the
  solution $\mu_i=\nu_i=0$ for $i=1,\ldots,k$. Therefore, the polynomials $B_i$
  and $tB_i$ are linearly independent and $p$ consequently admits a
  factorization of the desired shape.
\end{proof}

\begin{theo}\label{theo1}
	Let $M=P+\e D\in \DQ[t]$ be a polynomial with no real polynomial factor in the
  primal part ($\mrpf(P)=1$) and $\norm{P}=\prod_{i=1}^m N_i^{n_i}$ for
  irreducible, quadratic and coprime real polynomials $N_1,\ldots,N_m \in \R[t]$
  and positive integers $n_1,\ldots,n_m \in\mathbb{N}$. The polynomial $M$
  admits a factorization if and only if $\prod_{i=1}^m N_i^{n_i-1}$ is a factor
  of $\norm{M}$. (This is the case if and only if $\norm{M}$ factorizes in the
  sense of Lemma~\ref{lem:dualfac}).
\end{theo}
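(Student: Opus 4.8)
The plan is to route both implications through a single reformulation of the divisibility condition. By Lemma~\ref{lem:dualfac} applied to $p=\norm{M}$ with primal part $f=\norm{P}=\prod_{i=1}^m N_i^{n_i}$, the condition $\prod_{i=1}^m N_i^{n_i-1}\mid\norm{M}$ is equivalent to $\norm{M}$ admitting a factorization into monic quadratic polynomials over $\D$ each having an irreducible primal part. Indeed, since $\prod_i N_i^{n_i-1}$ is a real polynomial already dividing $\norm{P}$, dividing $\norm{M}=\norm{P}+\e(\cdots)$ amounts to dividing its dual part, which is exactly the criterion of Lemma~\ref{lem:dualfac}. This is the hinge of the proof, so I would establish it first. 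I would then assume without loss of generality that $M$ is monic: the leading coefficient of $P$ is a nonzero, hence invertible, quaternion, and left-multiplication by the inverse of the leading coefficient of $M$ preserves right factors as well as the value $\mrpf(P)=1$ (real polynomials being central), which makes Lemma~\ref{lemma3} applicable.

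For necessity, suppose $M=(t-h_1)\cdots(t-h_n)$ with $h_j=p_j+\e d_j$. Since the primal part of a product is the product of the primal parts, $P=(t-p_1)\cdots(t-p_n)$, and $\mrpf(P)=1$ forces every $p_j$ to be a non-real quaternion, for otherwise $t-p_j$ would be a real linear factor of $P$. Consequently each primal norm $\norm{t-p_j}=t^2-2\scalar(p_j)\,t+\norm{p_j}$ has negative discriminant $4(\scalar(p_j)^2-\norm{p_j})<0$ and is irreducible over $\R$. Multiplicativity of the norm then gives $\norm{M}=\prod_{j=1}^n\norm{t-h_j}$, a product of monic quadratics over $\D$ whose primal parts are irreducible; this is precisely a factorization in the sense of Lemma~\ref{lem:dualfac}, so the condition holds.

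For sufficiency I would argue by induction on $\deg M=n$. By the opening equivalence the hypothesis yields $\norm{M}=N^{(1)}\cdots N^{(n)}$ with each $N^{(j)}\in\D[t]$ a monic quadratic with irreducible primal part. Pick one factor $N^{(n)}$; its primal part is one of the $N_i$, an irreducible real quadratic, and therefore cannot divide $P$ since $\mrpf(P)=1$. Hence Lemma~\ref{lemma3} produces a unique $h$ with $t-h$ a right factor of $M$ and $\norm{t-h}=N^{(n)}$. Writing $M=M'(t-h)$, multiplicativity of the norm gives $\norm{M'}=N^{(1)}\cdots N^{(n-1)}$, so $\norm{M'}$ again factors into monic quadratics over $\D$ with irreducible primal parts; moreover $M'$ is monic, and its primal part $P'$ still satisfies $\mrpf(P')=1$, because $P=P'\,p$ with $p$ the primal part of $t-h$, and any real factor of $P'$ would (by centrality of real polynomials) be a real factor of $P$. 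The induction hypothesis factors $M'$ into linear factors, and appending $t-h$ on the right factors $M$.

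The routine bookkeeping (degrees, monicity, centrality of real polynomials) is straightforward; the step I expect to require the most care is the inductive application of Lemma~\ref{lemma3}, specifically verifying at every stage that the chosen quadratic factor's primal part does not divide the current primal part. This is exactly where the genericity hypothesis $\mrpf(P)=1$ is indispensable: it is what guarantees the non-division condition of Lemma~\ref{lemma3} and what propagates to the quotient $M'$. I would therefore take special care to confirm the opening equivalence and the preservation of $\mrpf=1$ under division before carrying out the induction, since these are what let both directions pass cleanly through Lemma~\ref{lem:dualfac} and Lemma~\ref{lemma3}.
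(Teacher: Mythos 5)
Your proof is correct and follows essentially the same route as the paper: convert the divisibility condition into a factorization of $\norm{M}$ into monic quadratics over $\D$ via Lemma~\ref{lem:dualfac}, then peel off monic linear right factors of $M$ recursively with Lemma~\ref{lemma3}, checking at each step that the quotient stays monic with $\mrpf=1$ and that its norm inherits the remaining quadratic factors. The only cosmetic differences are that you obtain necessity by feeding $\norm{M}=\prod_j\norm{t-h_j}$ into the only-if part of Lemma~\ref{lem:dualfac} (the paper expands this product directly, which is the same computation), and that you make explicit the monicity normalization that the paper leaves implicit.
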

\begin{proof}
  Let us assume $M=\prod_{j=1}^k F_j$, where $F_j=P_j+\e D_j \in\DQ[t]$ are
  linear polynomials. Then
  \begin{align*}
    \norm{M}=\prod_{j=1}^k \norm{F_j}
    = \prod_{i=1}^mN_i^{n_i} + \e
    \sum_{j=1}^k (P_j\Cj{D_j}+D_j\Cj{P_j}) \norm{P_j}^{-1} \prod_{i=1}^m N_i^{n_i}.
  \end{align*}
  Since for every $j=1,\ldots,k$ there exists $i\in\{1,\ldots,m\}$ such that
  $\norm{P_j}= N_i$, we can conclude that $\prod_{i=1}^m N_i^{n_i-1}$ is a
  factor of $\norm{M}$.

  Let us assume on the other hand that $\prod_{i=1}^m N_i^{n_i-1}$ is a factor
  of $\norm{M}$. By Lemma~\ref{lem:dualfac} we know that $\norm{M}$ admits a
  factorization into quadratic and monic polynomials $q_1$, $\ldots$, $q_k$ in
  $\D[t]$. By assumption $P$ does not have a real factor. Thus we can use
  Lemma~\ref{lemma3} to find $h_k\in\DQ$ and $Q\in\DQ[t]$ such that $M=
  Q(t-h_k)$ and $\norm{t-h_k}=q_k$. Obviously, $Q$ cannot have a real
  polynomial factor in the primal part as otherwise $M$ would have a factor in
  the primal part which contradicts our assumptions. Further
  $\norm{M}=\norm{Q}q_k$, whence $\norm{Q}=\prod_{i=1}^{k-1}q_i$. Thus we can
  use Lemma~\ref{lemma3} recursively to obtain $h_1, \ldots, h_{k-1}\in\DQ[t]$
  such that $M=\prod_{i=1}^k (t-h_i)$ and $\norm{t-h_i}=q_i$.
\end{proof}

\begin{example}\label{ex:2.7}
  The polynomial $M=P+\e D$ with $P=(t-\qi)(t-\qk)$ and $D=t-\qj$ does not admit
  a factorization. The norm of $M$ equals $(t^2+1)^2+2\e (t^3+1)$ but its dual
  part does not have the factor $t^2+1$. This violates the necessary
  factorization condition of Theorem~\ref{theo1}.
\end{example}

\subsection{Algorithm}
\label{sec:algorithm}

The proofs of Lemma~\ref{lemma3} and Theorem~\ref{theo1} are constructive and
similar to the proofs in \cite{hegedus13}. The difference is, that we need to
use quadratic \emph{dual} polynomial factors of the norm polynomial. Nonetheless
the same Algorithm~\ref{alg1} can be used to compute factorizations.

\begin{algorithm}[H]\label{alg1}
  \caption{Rfactor {\cite[Algorithm~1]{hegedus13}}}
  \KwInput{$M=P+\e D\in\DQ[t]$ such that $\mrpf(P)=1$ and $\norm{M}$ admits a
    factorization in the sense of Lemma~\ref{lem:dualfac}.}
  \If{$\deg(M)=1$}{
    \KwReturn{M.}
  }
  $N\leftarrow$ monic, quadratic factor of $\norm{M}$ in $\D[t]$ \\
  $h\leftarrow$ common dual quaternion zero of $N$ and $M$\\
$M \leftarrow$ quotient of right-division of $M$ by $t-h$\\
    \KwReturn{$\operatorname{Rfactor}(M),(t-h)$.}
\end{algorithm}

Algorithm~\ref{alg1} is not deterministic as its output depends on the choice of
the quadratic factor $N$ in Line~2. In the generic case, the norm polynomial of
$P$ has $n$ coprime irreducible quadratic factors in $\R[t]$. Then the
factorization of $\norm{M}$ is unique up to permutations of the factors, as we
have seen in the proof of Lemma~\ref{lem:dualfac}. In this case, there exist
$n!$ different factorizations of $M$, as in the original case of ``generic
motion polynomials'' \cite{hegedus13}. The situation is different, if the norm
of $P$ has quadratic factors of higher multiplicity.

\begin{cor}
  \label{cor:inffac}
  Let $M=P+\e D\in\DQ[t]$ be a dual quaternion polynomial that admits a
  factorization. If the norm of $P$ has a quadratic factor $N \in \R[t]$ with
  multiplicity $m \ge 2$, then $M$ admits infinitely many factorizations.
\end{cor}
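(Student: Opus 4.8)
The plan is to exploit the non-uniqueness of the choice in Line~2 of Algorithm~\ref{alg1}: when the quadratic factor $N$ occurs with multiplicity $m\ge 2$ in $\norm{P}$, the norm polynomial $\norm{M}$ carries a whole family of monic quadratic factors in $\D[t]$ whose primal part equals $N$, and each of them produces, via Lemma~\ref{lemma3}, a \emph{different} rightmost linear factor of $M$. We may assume $M$ is monic and, following the standing genericity assumption, that $\mrpf(P)=1$. Write $\norm{M}=\norm{P}+\e s$ and $\norm{P}=N^{m}R$ with $R$ coprime to $N$ (so $R=\prod_{i\ge 2}N_i^{n_i}$ in the notation of Theorem~\ref{theo1}). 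Since $M$ admits a factorization, Theorem~\ref{theo1} guarantees that $N^{m-1}\prod_{i\ge 2}N_i^{n_i-1}$ divides $s$; in particular $N\mid s$, because $m-1\ge 1$.

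The crux is the first step: to show that for \emph{every} $\lambda\in\R[t]$ with $\deg\lambda\le 1$, the monic quadratic polynomial $\widetilde N:=N+\e\lambda\in\D[t]$ divides $\norm{M}$. Setting $A:=\norm{P}/N=N^{m-1}R$, divisibility of $\norm{M}$ by $\widetilde N$ reduces to $N\mid (s-\lambda A)$; this holds because $N\mid s$ and, as $m\ge 2$, $N\mid A$ and hence $N\mid\lambda A$. Thus $\widetilde N$ is a genuine factor of $\norm{M}$ with cofactor $A+\e\,(s-\lambda A)/N$, and we obtain a two-parameter family of admissible norm factors, all sharing the primal part $N$.

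Next I would apply Lemma~\ref{lemma3} to each $\widetilde N$. Its primal part $N$ does not divide $P$, since $\mrpf(P)=1$ and $N$ is a real polynomial of positive degree; hence there is a unique $h_\lambda\in\DQ$ with $t-h_\lambda$ a right factor of $M$ and $\norm{t-h_\lambda}=\widetilde N$. Distinct $\lambda$ give distinct $\widetilde N$ and therefore distinct $h_\lambda$, for equal zeros would force equal norms. To complete each $t-h_\lambda$ to a full factorization, consider the cofactor $Q_\lambda:=M/(t-h_\lambda)$, for which $\norm{Q_\lambda}=\norm{M}/\widetilde N$; its primal part is $A=N^{m-1}R$, and as in the proof of Theorem~\ref{theo1} it carries no real polynomial factor. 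A short divisibility check (the relevant exponent for $N$ drops from $m-1$ to $m-2$ and is unchanged for the other factors, matching the drop in $\norm{Q_\lambda}$) shows that $\norm{Q_\lambda}$ again satisfies the factorizability criterion of Lemma~\ref{lem:dualfac}, so by Theorem~\ref{theo1} the polynomial $Q_\lambda$ admits a factorization. Multiplying it on the right by $t-h_\lambda$ yields a factorization of $M$ with rightmost factor $t-h_\lambda$. Since the $h_\lambda$ are pairwise distinct and $\lambda$ ranges over an infinite set, $M$ has infinitely many factorizations.

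The main obstacle I expect is the bookkeeping in the first and last steps: verifying that the perturbed norm factor $\widetilde N$ divides $\norm{M}$ for \emph{all} admissible $\lambda$, and that the cofactor $Q_\lambda$ stays factorizable. Both hinge on the hypothesis $m\ge 2$, which makes $N^{m-1}$ a genuine common factor (exponent $\ge 1$) that absorbs the $\lambda$-dependent term modulo $N$; without it, the family of valid $\lambda$ would collapse to a single value and one would recover uniqueness rather than infinitude.
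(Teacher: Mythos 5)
Your proposal is correct and follows essentially the same route as the paper: both exploit the fact that, because $N$ has multiplicity $m\ge 2$ in $\norm{P}$, the norm $\norm{M}$ admits infinitely many monic quadratic factors $N+\e\lambda$ in $\D[t]$, each inducing a distinct factorization of $M$. The paper compresses this into one line (splitting the factor $N^m+\e N^{m-1}\lambda$ as $\prod_{i=1}^m(N+\e\lambda_i)$ and concluding that infinitely many factorizations of $\norm{M}$ give infinitely many of $M$), while your argument additionally supplies the details that this conclusion tacitly uses, namely distinctness of the induced right factors $t-h_\lambda$ via the uniqueness statement of Lemma~\ref{lemma3} and factorizability of the cofactor $Q_\lambda$ via Theorem~\ref{theo1}.
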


\begin{proof}
  The norm polynomial $\norm{M}$ has the factor $N^m+\e N^{m-1}\lambda$ for a
  linear polynomial $\lambda\in\R[t]$. There are infinitely many ways to write
  $\lambda$ as a sum of $m$ linear polynomials $\lambda_i$ so that
  \begin{align*}
    N^m+\e N^{m-1}\lambda = N^m+\e N^{m-1}\sum_{i=1}^m
    \lambda_i = \prod_{i=1}^m (N+\e \lambda_i).
  \end{align*}
  Consequently, $\norm{M}$ has infinitely many factorizations and so does~$M$.
\end{proof}

\subsection{Translational Factors}
\label{sec:translational-factors}
Algorithm~\ref{alg1} is known to work for many more general cases (without the
assumption $\mrpf(P) = 1$) but cases of failure are known as well. In
particular, it was already observed in \cite{hegedus13} that the algorithm is
applicable to motion polynomials where the irreducible real polynomial factors
of its primal part are linear and at most of multiplicity one. This is still the
case for general polynomials $M=cP+\e D\in\DQ[t]$, where $c=\prod_{i=1}^n
c_i\in\R[t]$ is a product of distinct linear polynomials $c_i$ and $P$ has no
real polynomial factor. The norm polynomial of $M$ has $c$ as factor and the
primal part of $\norm{M}$ has $c^2$ as factor. Thus we can use
Lemma~\ref{lem:dualfac} to find linear $\lambda_i\in\R[t]$ such that $\norm{M}$
has the factors $c_i^2+\e\lambda_i$. The primal part of these factors do not
divide the primal part of $M$ due to the assumption that every linear factor of
the primal part has multiplicity one, thus we can use Lemma~\ref{lemma3} to
compute linear factors $c_i + \e (d_1\qi + d_2\qj + d_3\qk)$ of $M$ and
consequently, the Algorithm~\ref{alg1} still works in this more general setting.
The linear factors $c_i + \e (d_1\qi + d_2\qj + d_3\qk)$ parameterize all
translations in the fixed direction $(d_1,d_2,d_3)$, hence the name
``translational factors''.

If on the other hand, we have a reduced polynomial $M=c^2P+\e D$ where
$c\in\R[t]$ is a linear polynomial, it does not admit a factorization. To see
this, let us assume the contrary $M=\prod_{i=1}^n (P_i+\e D_i)$. Since the
primal part of $M$ is the product of the primal parts of the factors, we know
that two of the factors have $c$ as primal part. Thus, there exist $Q_1$, $Q_2$,
$F_1$ and $F_2\in\Q[t]$ such that $M=(cQ_1+\e F_1)(cQ_2+\e F_2) = c(cQ_1Q_2 +
\e(Q_1F_2+F_1Q_2))$. But this contradicts the assumption that $M$ is reduced.

\subsection{Circularity and Factorizability}
\label{sec:circularity}

It is known that generic motion polynomials $Q = P + \e
D$ (polynomials in $\DQ[t]$ with $\mrpf(P) = 1$ and $P\Cj{D} + D\Cj{P} = 0$)
have the following properties:
\begin{itemize}
\item Their norm polynomial $\norm{P}$ factors into quadratic irreducible
  polynomials over $\R$,
\item they always admit factorizations and
\item their generic trajectories are entirely circular \cite[Theorem~1]{li16}.
\end{itemize}

The circularity of an algebraic curve is defined as half the number of
intersection points with the absolute circle at infinity counted with their
multiplicities. A curve is entirely circular if all its intersections with the
plane at infinity lie on the absolute circle.

Let us quickly cast these concepts into algebraic equations. In the dual
quaternion formalism, $\Pj^3(\R)$ is identified with the projective space over
the vector space spanned by $1$, $\e\qi$, $\e\qj$, $\e\qk$. Writing a general
vector as $x_0 + \e x$ with $x = x_1\qi + x_2\qj + x_3\qk$, the plane at
infinity is described by $x_0 = 0$ and the absolute circle at infinity is given
by the additional equation $\norm{x} = 0$. If $x_0 + \e x$ is not a constant
dual quaternion but a dual quaternion polynomial (that is, a rational
parameteric curve), the parameter values of its intersection points with the
plane at infinity are the zeros of $x_0$. If they are also zeros of $\norm{x}$,
they contribute, with their respective multiplicity, to the curve's circularity.

Consider now a general polynomial $M = P + \e D$ with $\mrpf(P) = 1$ whose norm
polynomial $\norm{M}$ factors over $\D$ into quadratic polynomials. We have
already seen that $M$ admits a factorization. Now we investigate necessary
properties of its trajectories. It turns out that these are much weaker than in
the motion polynomial case.

\begin{theo}
  \label{thm:circularity}
  Let $M=P+\e D\in\DQ[t]$ be a polynomial such that $\mrpf(P)=1$ and $M$ admits
  a factorization. Then the trajectories of the motion parameterized by $M$ have
  the following property: All intersection points of the curve with the plane at
  infinity with multiplicity $\mu>1$ intersect the absolute circle with
  multiplicity $\mu$.
\end{theo}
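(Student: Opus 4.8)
The plan is to turn the geometric circularity statement into an algebraic divisibility fact and then feed it the factorizability hypothesis through Theorem~\ref{theo1}. First I would fix a finite point $x=1+\e v$ with $v=v_1\qi+v_2\qj+v_3\qk$ and expand its trajectory $t\mapsto(P-\e D)x(\Cj P+\e\Cj D)$ as in \eqref{eq:act}. A direct computation puts it into homogeneous form $\norm P+\e\,\xi$ with the purely vectorial quaternion polynomial $\xi=Pv\Cj P+(P\Cj D-D\Cj P)$. In the notation of Section~\ref{sec:circularity} the homogenizing coordinate is $x_0=\norm P$, so the intersections with the plane at infinity occur at the (complex) zeros of $\norm P=\prod_i N_i^{n_i}$, and the zero of an irreducible quadratic factor $N_i$ has multiplicity exactly $\mu=n_i$. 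Thus the theorem reduces to showing that whenever $n_i\ge 2$, the quantity $\norm\xi$ vanishes at the roots of $N_i$ to order at least $n_i$, so that the intersection multiplicity with the absolute circle $\{x_0=0,\ \norm\xi=0\}$ equals~$\mu$.

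Next I would expand $\norm\xi$. Writing $w\coloneqq P\Cj D-D\Cj P$ and using that both $Pv\Cj P$ and $w$ are vectorial, I obtain
\[
  \norm\xi=\norm P^2\norm v+2\,(Pv\Cj P)\cdot w+\norm w,
\]
where $\cdot$ is the Euclidean dot product of vector quaternions. The first summand is divisible by $\norm P^2$ and is harmless. The crux is to show that the remaining two summands vanish to order at least $n_i$ at the roots of $N_i$.

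For the mixed term I would use $(Pv\Cj P)\cdot w=-\scalar\big((Pv\Cj P)w\big)$ together with $\Cj P P=\norm P$ and the cyclicity $\scalar(AB)=\scalar(BA)$ of the scalar part; this rewrites $\scalar\big((Pv\Cj P)w\big)$ as $\norm P$ times a polynomial, so the mixed term is divisible by $\norm P$ and hence vanishes to order $\ge n_i$. For the last term I would establish the identity $\norm w=4\,\norm P\,\norm D-s^2$, where $s\coloneqq P\Cj D+D\Cj P$ is precisely the real dual part of $\norm M$. Here the hypothesis enters: by Theorem~\ref{theo1} (equivalently Lemma~\ref{lem:dualfac}) the existence of a factorization forces $\prod_i N_i^{n_i-1}$ to divide $s$, so at the roots of $N_i$ one has $\operatorname{ord}(s^2)\ge 2(n_i-1)$ while $\operatorname{ord}(\norm P\,\norm D)\ge n_i$. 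For $n_i\ge 2$ we have $2(n_i-1)\ge n_i$, whence $\norm w$, and therefore $\norm\xi$, vanishes to order at least $n_i=\mu$ there, which is the claimed intersection multiplicity with the absolute circle.

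I expect the mixed term to be the main obstacle. A priori $P(z_i)$ is a nonzero isotropic (complexified) quaternion and $w$ need not vanish at a root $z_i$ of $N_i$, so neither $Pv\Cj P$ nor $w$ is individually divisible by $N_i$; only their pairing is, and seeing this requires the cyclic-scalar identity above rather than a naive evaluation. A secondary point to get right is the bookkeeping for the intersection multiplicity with the codimension-two absolute circle, which is governed by $\min(\operatorname{ord}x_0,\operatorname{ord}\norm\xi)$. The computation also transparently explains why the statement must exclude $\mu=1$: for $n_i=1$ the estimate $2(n_i-1)\ge n_i$ fails and circularity can genuinely be lost.
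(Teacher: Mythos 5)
Your proposal is correct and follows essentially the same route as the paper's proof: expand the trajectory into primal part $\norm{P}$ and vectorial dual part, show the cross term in the norm of the dual part is divisible by $\norm{P}$, and handle the remaining term via the identity $\norm{P\Cj{D}-D\Cj{P}}=4\norm{P}\norm{D}-(P\Cj{D}+D\Cj{P})^2$ combined with the fact that factorizability forces $\prod_i N_i^{n_i-1}$ to divide $P\Cj{D}+D\Cj{P}$, so that $2(n_i-1)\ge n_i$ when $n_i\ge 2$. The only cosmetic difference is that you establish divisibility of the mixed term by $\norm{P}$ via cyclicity of the scalar part, whereas the paper expands the norm into four summands and observes that the middle factor $x\Cj{P}D+\Cj{D}P\Cj{x}$ is real and hence commutes with $P$; both are the same computation in different clothing.
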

\begin{proof}
  The trajectory of an arbitrary point $x_0+\e x$ with $x=x_1\qi+x_2\qj+x_3\qk$
  in projective three-space with respect to $M$ is given by
  \begin{align}\label{eq2}
    (P-\e D)(x_0+\e x)(\Cj{P}+\e\Cj{D})&= x_0\norm{P}+\e(Px\Cj{P}+x_0(P\Cj{D}-D\Cj{P})).
  \end{align}
  This curve's intersection points with the plane at infinity correspond to zeros
  of its primal part and consequently to the factors of $\norm{P}$. Let us write
  $\norm{P}=\prod_{i=1}^m N_i^{n_i}$ where $N_1$, $\ldots$, $N_m\in\R[t]$ are
  coprime irreducible polynomials. We need to show that all $N_i^{n_i}$ with
  $n_i>1$ are factors of the norm of the dual part in \eqref{eq2}. Let
  us have a look at the polynomial
  \begin{multline*}
    \norm{Px\Cj{P}+x_0(P\Cj{D}-D\Cj{P})}=
    \norm{P}^2\norm{x} - x_0\norm{P}(Px\Cj{D}+D\Cj{x}\Cj{P})\\+ x_0P(x\Cj{P}D+\Cj{D}P\Cj{x})\Cj{P} +x_0^2\norm{P\Cj{D}-D\Cj{P}}.
  \end{multline*}
  The first two summands have $\norm{P}$ as a factor. The middle factor of the
  third summand is a real polynomial, thus it commutes with $P$ and therefore
  the third summand also has the factor $\norm{P}$. For the last summand it
  holds
  \begin{align*}
    \norm{P\Cj{D}-D\Cj{P}}&=4\norm{P}\norm{D}-(P\Cj{D}+D\Cj{P})^2.
  \end{align*}
  Since $M$ admits a factorization, we know that $c \coloneqq \prod_{i=1}^m
  N_i^{n_i-1}$ is a factor of the dual part of $\norm{M}$ which is
  $P\Cj{D}+D\Cj{P}$. Thus the last summand has the factor $c^2$ which in turn
  has the factors $N_i^{n_i}$ with $n_i>1$. This proves the statement.
\end{proof}

The converse of Theorem~\ref{thm:circularity} is not true which can be seen
in the following example.
\begin{example}
  Let $M=(t-\qi)(t-\qk)+\e(t-\qj)$ be the polynomial of Example~\ref{ex:2.7}. The
  polynomial $Q=M(t-\qk)^2$ does not admit a factorization, because it does not
  fulfill the requirements of Theorem~\ref{theo1}, but its norm polynomial
  $\norm{Q}$ has the factor $(t^2+1)^2$. From the proof above we can conclude
  that the trajectories of $Q$ are entirely circular even though the polynomial
  does not admit a factorization.
\end{example}

\section{Construction of Mechanisms}
\label{sec:mechanisms}

In \cite{hegedus13,li15b} the authors presented algorithms to find
factorizations of monic motion polynomials $M=P+\e D \in \DQ[t]$ into linear
factors where all factors are motion polynomials themselves, i.\,e. they
parametrize rotations or translations. This composition of rotations and
translations can be used to construct an open kinematic chain with revolute and
prismatic joints, coupled by the common parameter $t$, which can perform the
rational motion given by $M$. Different factorizations yield different open
chains which can be coupled to reduce the degrees of freedom in the resulting
kinematic structure. This is illustrated in Figure~\ref{fig:kinematic-chains}
for a quadratic motion polynomial $M$ with two distinct factorizations
$M=F_1F_2=G_1G_2$. Each factorization yields a kinematic chain consisting of two
revolute joints, which are interlocked to obtain the depicted mechanism. The
connecting link of $F_2$ and $G_2$ performs the desired one-parametric motion
given by $M$.

As is usual in mechanism science, we denote revolute joints by the letter ``R''
and prismatic joints by the letter ``P''. The mechanism of
Figure~\ref{fig:kinematic-chains} would then be referred to as closed RRRR- or
4R-mechanism.

\begin{figure}
  \centering
\begin{overpic}[trim=400 0 0 153,clip,scale=0.5]{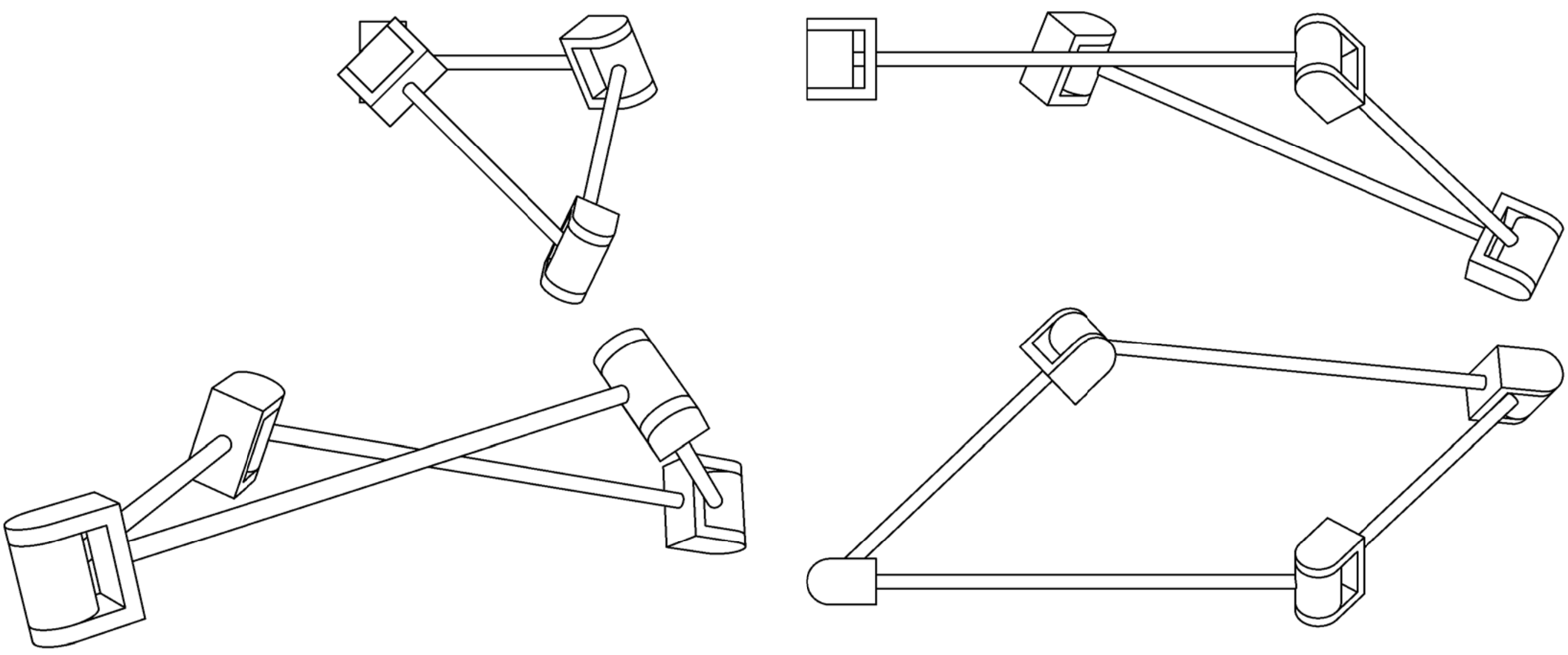}
  \put(-1,0){$F_1$}
  \put(75,0){$G_1$}
  \put(100,40){$G_2$}
  \put(20,40){$F_2$}
\end{overpic}
  \caption{Realization of two factorizations $M = F_1F_2 = G_1G_2$ as a mechanisms.}
  \label{fig:kinematic-chains}
\end{figure}
In this paper we consider polynomials in $\DQ[t]$ which no longer need to
fulfill Study's condition. This results in factorizations where not all linear
factors are motion polynomials and hence represent vertical Darboux motions. In
theory this would allow the construction of kinematic chains with ``vertical
Darboux'' joints (and revolute or prismatic joints in special cases). Darboux
joints are problematic from an engineering viewpoint. Thus we will focus on a
different approach where vertical Darboux joints are simply replaced by
cylindrical (``C'') joints that allow the simultaneous rotation around and
translation along an axis. Of course, this comes with the downside that
cylindrical joints have two degrees of freedom. This will increase the degrees
of freedom of the corresponding open chain but, if it is sufficiently
constrained otherwise, does not increase the degrees of freedom of the complete
mechanical system: The cylindrical joints will actually just perform the
respective vertical Darboux motions. In this way, kinematic chains obtained by
the known algorithms might be coupled with chains obtained by our results to
construct mechanisms with revolute, prismatic and cylindrical joints.

As cylindrical joints introduce an additional degree of freedom, it might be
desirable to keep the number of C-joints low. This can be done by a certain
extend by choosing appropriate quadratic factors of the norm polynomial in
Algorithm~\ref{alg1}. We will use this in
Section~\ref{sec:quadratic-polynomials} to obtain over-constrained mechanisms
corresponding to certain quadratic dual quaternion polynomials which is only
feasible by keeping the number of C-joints minimal.

\subsection{Bennett Motions and Four-Bar Linkage}
\label{sec:bennett-motions}
Quadratic motion polynomials generically represent so-called Bennett motions
\cite{hamann11}. In general, i.\,e. when the norm polynomial is the product of
two coprime real quadratic polynomials, we can use the factorization algorithm
for motion polynomials to obtain two distinct factorizations which in turn
correspond to a closed chain with four revolute joints, a closed 4R-linkage or
``Bennett mechanism'' (Figure~\ref{fig:kinematic-chains}). But if the norm
polynomial is a quadratic irreducible polynomial to the power of two, there
exists only one factorization into linear motion polynomials and the
construction fails. (This is actually a quite common case. The motion can be
obtained by composing two rotations around different axes but with identical
angular velocities.) Our algorithm allows us to find another factorization into
linear (non-motion) polynomials, which enables us to construct an RRCC-linkage
that is capable of performing the given Bennett motion.

\begin{example}
  The motion polynomial $M=t^2-((1+2\e)\qi +\qk + \e\qj)t-\qj+\e(\qi-2)$ has the
  norm polynomial $(t^2+1)^2$. It only admits a unique factorization into motion
  polynomials, namely $M=G_1G_2$ with $G_1=t-\qi-\e\qj$ and $G_2=t-\qk-2\e\qi$.
  But we can also write $\norm{M}=(t^2+1+\e \lambda)(t^2+1-\e\lambda)$ for an
  arbitrary linear polynomial $\lambda=\lambda_1t+\lambda_0\in\R[t]$. Using
  these factors of the norm polynomial in Algorithm~\ref{alg1} yields the
  additional factorization $M=F_1F_2$ where
    \begin{align*}
    F_1 &= G_1 + \frac{\e}{2}((\qi+\qk)\lambda_0-(1+\qj)\lambda_1),\\
    F_2 &= G_2 -\frac{\e}{2}((\qi+\qk)\lambda_0-(1+\qj)\lambda_1).
  \end{align*}
  Note that $F_1$ and $F_2$ are not motion polynomials and therefore parametrize
  ``proper'' vertical Darboux motions. Thus we need to use C-joints to obtain an
  open kinematic chain corresponding to this factorization. We can couple this
  2C-chain with the 2R-chain obtained by the first factorization, since
  $M=F_1F_2=G_1G_2$. To specify the mechanism corresponding to these
  factorizations, it suffices to calculate the angles and distances between the
  joint axes. They are independent from the mechanism's configuration which
  depends on the common joint parameter $t$. In the following, we will calculate
  these values for the mechanism corresponding to the values $\lambda_0=3$,
  $\lambda_1=4$. To find the axes of $G_1$ and $G_2$, we can use \eqref{eq:rot}.
  They are $(0,0,-1)+\R(1,0,0)$ and $(0,-2,0)+\R(0,0,1)$. The axes of $F_1$ and
  $F_2$ can be found by computing the fixed line of these transformations. For
  $F_1$ we know the direction of the axis from the primal part, namely
  $(1,0,0)$, thus it suffices to find a point on the axis. For $t=0$, the image
  of an arbitrary line $(x_1,x_2,x_3)+\R(1,0,0)$ is $(x_1+4,-x_2 - 3,-x_3 - 6) +
  \R(1,0,0)$. Subtracting the point $(x_1,x_2,x_3)$ should yield the line
  $\R(1,0,0)$. We infer $x_2=-3/2$, $x_3=-3$ while $x_1$ can be chosen
  arbitrarily, for example $x_1=0$. The axis of $F_2$ can be found similarly, it
  is given by $(-2,-7/2,0)+\R(0,0,1)$. The angles between $F_1$ and $F_2$, or
  $G_1$ and $G_2$ are $\pi/2$, respectively. The axes of $F_1$ and $G_1$ or
  $F_2$ and $G_2$ are parallel. The distance between $F_1$ and $F_2$ or $G_1$
  and $G_2$ is $2$, the distance between $F_1$, $G_1$ or $F_2$, $G_2$ is $5/2$,
  respectively. Figure~\ref{RRCC} depicts the resulting mechanism.

\begin{figure}
  \centering
\begin{overpic}[scale=0.3]{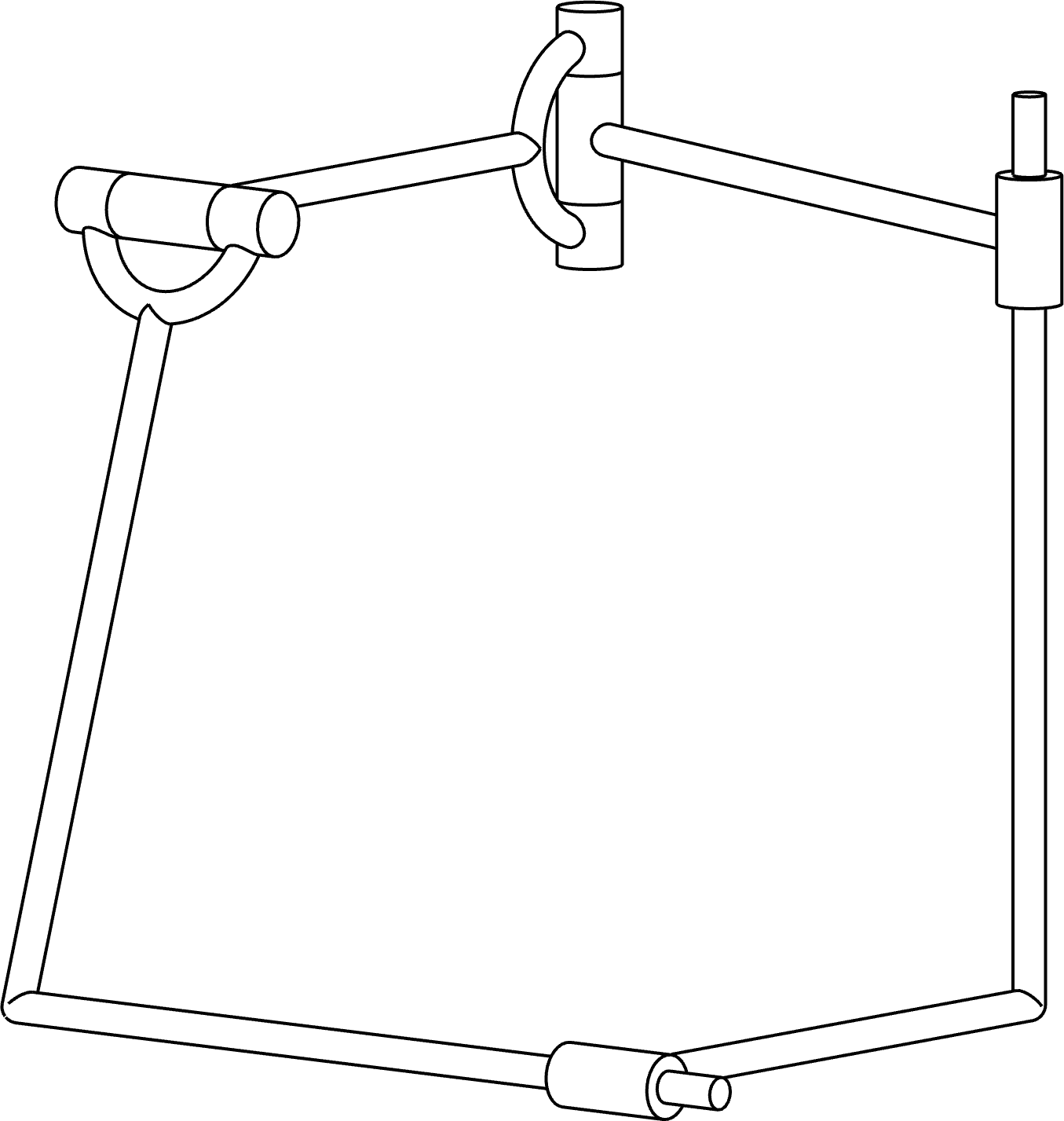}
  \put(35,90){$G_1$}
  \put(55,10){$F_2$}
  \put(97,80){$F_1$}
  \put(-5,80){$G_2$}
\end{overpic}
     \caption{An RRCC-mechanism to perform a Bennett motion.}\label{RRCC}
\end{figure}

\end{example}

\subsection{Quadratic Polynomials and Four-Bar Linkages}
\label{sec:quadratic-polynomials}
While all rational motions can be represented by motion polynomials, the degree
of this representation might not be optimal. Consider a generic polynomial $Q =
P + \e D$. We can use equation~\eqref{eq:fiberproj} to obtain the motion polynomial $M=\norm{P}P+\frac{1}{2}\e (P\Cj{D}-D\Cj{P})P$ which represents the same motion. In general, the
degree of $M$ is three times the degree of $Q$ (it can be lower, if real
polynomials can be canceled). Thus factorizing $Q$ instead of $M$ might result
in fewer factors and therefore in mechanisms with fewer joints. We study this in the case of quadratic polynomials in $\DQ[t]$. Their representations as motion
polynomials are of degree four or six, which results in factorizations with four
or six linear motion polynomials, respectively. If we get six linear factors,
each factorization corresponds to an open 6R-chain which parametrize the whole
$\SE$. Coupling different 6R-chains therefore does not constrain the obtained
mechanism and we cannot obtain a mechanism with a degree of freedom lower than
six. In the case that $M$ has degree four, factorizations of $M$ correspond to
open 4R-chains. Interlocking two different 4R-chains yields a closed
8R-mechanism which has two degrees of freedom. To obtain a mechanism which only
allows for a one-parametric motion we need to couple this closed 8R-mechanism
with a third open 4R-chain. This approach therefore results in a rather
complicated mechanism.
The quadratic polynomial on the other hand admits a factorization into two
linear polynomials, provided it meets the requirements of Theorem~\ref{theo1}.
This allows for the construction of open CC-chains. This is actually just a
special RPRP-chain (revolute and prismatic joints alternate) in disguise and in
this sense does not offer advantages over 4R-chains.

However, if the norm polynomial of the primal part is a square, there exist two
different factorizations, each with a linear motion polynomial as one factor.
Combining the corresponding kinematic chains, an open RC- and an open CR-chain,
gives a closed RCRC four-bar linkage. It has just one degree of freedom and
performs the motion given by~$Q$ and $M$.

\begin{example}
  For $P=t^2-t(\qi+\qk)-\qj$ and $D=(2\qk-4\qi-1)t+(\qi-\qj-1)$ the motion
  polynomial $M=\norm{P}P+\frac{1}{2}\e (P\Cj{D}-D\Cj{P})P$ is of degree four
  (after dividing off a real polynomial factor), but the representation as a
  general polynomial $Q = P + \e D$ is just of degree two. The motion
  polynomial $M$ admits factorizations into four linear motion polynomials and
  thus the construction of open 4R-chains.
    The general polynomial $Q$ on the other hand can be factored as 
  $Q=F_1F_2=G_1G_2$ with
  \begin{gather*}
    F_1 = t-\qi+3\e\qk,\quad
    F_2 = t-\qk-e(1+4\qi+\qk),\\
    G_1 = t-\qi-\e(1+\qi+\qj-2\qk),\quad
    G_2 = t-\qk-\e(3\qi-\qj).
  \end{gather*}
  The first factorization $F_1F_2$ corresponds to an RC-chain, the second
  factorization $G_1G_2$ gives a CR-chain. Combining them yields the RCRC
  four-bar linkage of Figure~\ref{RCRC}.
\begin{figure}
  \centering
\begin{overpic}[scale=0.3]{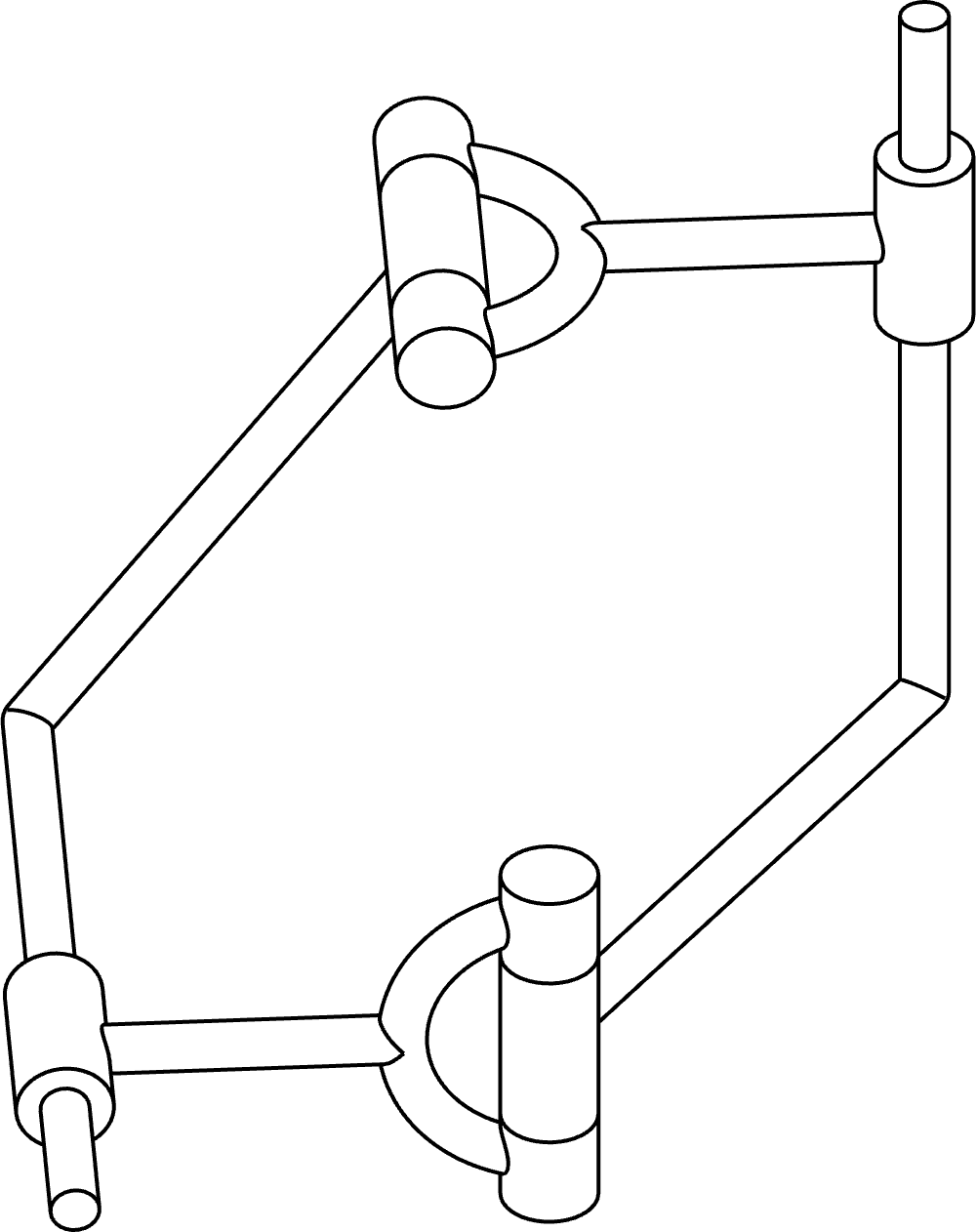}
  \put(18,83){$G_2$}
  \put(80,80){$G_1$}
  \put(52,10){$F_1$}
  \put(-10,15){$F_2$}
\end{overpic}
     \caption{An RCRC-mechanism constructed from a quadratic polynomial.}\label{RCRC}
\end{figure}

\end{example}

\section{Concluding Remarks}
\label{sec:concluding-remarks}

We have shown how to factorize polynomials over the dual quaternions under two
assumptions:
\begin{itemize}
\item The norm polynomial factorizes into quadratic factors over the dual
  numbers and
\item the primal part does not have a real polynomial factor of positive degree.
\end{itemize}
The first condition is obviously necessary for existence of a factorization, the
second condition is a certain restriction. Precise criteria for existence of
factorizations and algorithms for their computation in the excluded non-generic
case are subject of ongoing research. Even in case of motion polynomials a
complete answer is yet unknown.

We consider our factorization results as interesting in their own right but we
also demonstrated that they complement well the known constructions of
mechanisms from motion polynomials. It seems that abandoning Study's condition
(and thus giving up uniqueness of motion representation) is a promising concept
and deserves further investigations.

\section*{Acknowledgments}

Johannes Siegele was supported by the Austrian Science Fund (FWF): P~30673
(Extended Kinematic Mappings and Application to Motion Design).

\bibliographystyle{alpha}

\end{document}